\documentclass{article}
\usepackage[utf8]{inputenc}
\usepackage[T2A]{fontenc}
\usepackage{babel}

\usepackage{todonotes}

\usepackage[a4paper, margin=1.5cm]{geometry}

\usepackage{amsmath,amssymb,amsthm,amsfonts}
\usepackage{mathrsfs}
\usepackage{mathtools}
\mathtoolsset{showonlyrefs}
\usepackage{hyperref}
\hypersetup{
    colorlinks=true,
    linkcolor=red,
    filecolor=magenta,      
    urlcolor=cyan,
}

\usepackage{microtype}

\newcommand{\N}{{\mathbb N}}

\newcommand{\ch}{\operatorname{ch}}

\renewcommand{\phi}{\varphi}

\newtheorem{lemma}{Lemma}

\newtheorem{theorem}{Theorem}

\newtheorem{corollary}{Corollary}

\def\fs{\kern 0.5em}

\newcounter{problemcnt}
	

\newcounter{pun}[problemcnt]

\usepackage{graphicx}
\graphicspath{ {./Images/} }

\usepackage{appendix}

\title{On list chromatic numbers of 2-colorable hypergraphs}

\author{ Danila Cherkashin$^\mathrm{a,b}$,~
 Alexey Gordeev$^{\mathrm{c,d}}$\\
{\small ~a. Chebyshev Laboratory, St. Petersburg State University, 14th Line V.O., 29B, Saint Petersburg 199178 Russia}\\
{\small ~b. Moscow Institute of Physics and Technology, Laboratory of Combinatorial and Geometric Structures,}\\ 
{\small ~c. St.~Petersburg Department of V.~A.~Steklov Institute of Mathematics of the Russian Academy of Sciences}\\
{\small ~d. The Euler International Mathematical Institute, St. Petersburg, Russia}
\date{}
}

\begin{document}

\maketitle

\begin{abstract}
We give an upper bound on the list chromatic number of a 2-colorable hypergraph which generalizes the bound of Schauz on $k$-partite $k$-uniform hypergraphs. 
It makes sense for sparse hypergraphs: in particular we show that a $k$-uniform $k$-regular hypergraph has the list chromatic number 2 for $k \geq 4$.
Also we obtain both lower and upper bound on the list chromatic number of a complete $s$-uniform 2-colorable hypergraph in the vein of Erd{\H o}s--Rubin--Taylor theorem.
\end{abstract}

\section{Introduction}

A hypergraph $H$ is a pair of sets $(V, E)$, where $V$ is finite and $E \subset 2^V$; $V$ is called the set of vertices and $E$ the set of edges.
A hypergraph is said to be $n$-uniform if all of its edges have size $n$ (further we call them $n$-graphs). 
Note that a graph is a 2-uniform hypergraph. 

Given a hypergraph $H = (V, E)$ and a vertex $v \in V$, define the degree of the vertex $d_H(v)$ as the number of edges of $H$ that contain $v$.

A coloring of a hypergraph with $r$ colors is a map $f : V \to \{1, \dots, r\}$. A coloring $f$ of a hypergraph is said to be proper if each edge 
$e \in E$ contains two vertices $v_1, v_2 \in e$ such that $f(v_1) \neq  f(v_2)$.
The minimal number $r$ for which there exists a proper $r$-coloring of $H$ is called the chromatic number $\chi(H)$ of the hypergraph $H$.

Consider a 2-colorable hypergraph $H$. Its proper 2-coloring gives a partition of $V$ into two sets $A \cup B = V$, $A \cap B = \emptyset$ such that every edge of $H$ intersects both $A$ and $B$. Further we call such sets $A$ and $B$ parts of $H$. For a subset of a part $T \subset A$ define its neighborhood
\[
N_H(T) = \bigcup_{\substack{e \in E,\\e \cap T \neq \emptyset}} e \setminus A.
\]
For convenience define also $N_H(v) = N_H(\{v\})$.

Define a complete 2-colorable hypergraph $K^s_{n,m}$ as $s$-graph with sizes of parts $n$, $m$ and all possible edges of size $s$ intersecting both parts.

An orientation of a hypergraph $H=(V, E)$ is a map $\phi : E \to V$ which satisfies $\phi(e) \in e$ for any $e \in E$. For orientation $\phi$ define a degree function $d_\phi$ on the set of vertices as follows: $d_\phi(v) = |\phi^{-1}(v)|$.

To each vertex $v$ we assign a list $L(v)$ of colors which
can be used for $v$. Given a map $f : V \to \N$, we say that a hypergraph $H$ is $f$-list-colorable ($f$-choosable) if for any set of lists with lengths $|L(v)| = f(v)$ there exists a proper coloring of $H$. Hypergraph is $k$-choosable if it is $f$-choosable, where $f(v)=k$ for each $v$. The list chromatic number $\ch(H)$ of the hypergraph $H$ is the
minimum number $k$ such that $H$ is $k$-choosable.

List colorings of graphs and hypergraphs
were independently introduced by Vizing and by Erd{\H o}s, Rubin, and Taylor.
Clearly, $\ch(H) \geq \chi(H)$, since all lists can be taken equal to $\{1, \dots , \ch(H)\}$. 
At the same time, the chromatic number and the list chromatic number are different,
for example, for the complete bipartite graph $K_{3,3}$, for which $\ch(K_{3,3}) = 3$ (equality is attained at
the lists $\{1, 2\}, \{1, 3\}, \{2, 3\}$ assigned to the vertices of each part).

\paragraph{Sparse case.} For a hypergraph $H=(V,E)$ denote
\[
L(H)=\max_{\emptyset \neq E'\subset E}\frac{|E'|}{|\cup_{e \in E'} e|}.
\]

One of the methods of estimating the list chromatic number of a graph is the Alon--Tarsi method \cite{Alon1992}: if there exists an orientation $\phi$ of a graph $G$ with certain structural properties, then $G$ is $(d_\phi+1)$-choosable. In the case of bipartite (2-colorable) graphs any orientation has the desired properties, which gives two following theorems.

\begin{theorem}\label{thm:AlonTarsi}
If a bipartite graph $G=(V, E)$ has an orientation $\phi$, then $G$ is $(d_\phi+1)$-choosable.
\end{theorem}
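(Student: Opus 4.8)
The plan is to prove this via the Alon--Tarsi / Combinatorial Nullstellensatz method in the form tailored to bipartite graphs. Let $G=(V,E)$ with bipartition $V=A\cup B$, and fix an orientation $\phi:E\to V$; write $d=d_\phi$, so each vertex $v$ gets a list of size $d(v)+1$. Associate to each vertex $v$ a variable $x_v$ and form the graph polynomial $P(x)=\prod_{e=\{u,w\}\in E}(x_u-x_w)$, where for each edge we take the difference with a fixed sign convention — say, if $u\in A$ and $w\in B$ then the factor is $(x_u-x_w)$. A proper list coloring is exactly an assignment $x_v\in L(v)$ with $P(x)\neq 0$, so by the Combinatorial Nullstellensatz it suffices to exhibit a monomial $\prod_v x_v^{t_v}$ with $t_v\le |L(v)|-1=d(v)$ and $\sum_v t_v=\deg P=|E|$ whose coefficient in $P$ is nonzero. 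The natural candidate is $t_v=d_\phi(v)$, since $\sum_v d_\phi(v)=|E|$ automatically.

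\textbf{Computing the coefficient via a sign argument.} Expanding $P$, each monomial of full degree comes from choosing, in every factor $(x_u-x_w)$, one of the two variables; such a choice is precisely an orientation $\psi$ of $G$ (pick $\psi(e)$ to be the chosen endpoint), contributing the monomial $\prod_v x_v^{d_\psi(v)}$ with sign $(-1)^{b(\psi)}$, where $b(\psi)$ counts the edges oriented toward their $B$-endpoint. Thus the coefficient of $\prod_v x_v^{d_\phi(v)}$ equals $\sum_{\psi:\,d_\psi=d_\phi}(-1)^{b(\psi)}$, summed over orientations with the same out-degree sequence as $\phi$. The key observation is that because $G$ is bipartite, \emph{all} edges go between $A$ and $B$, so for any $\psi$ with $d_\psi=d_\phi$ the number of edges directed into $B$ is determined by the out-degrees at $A$: $b(\psi)=\sum_{v\in A} d_\psi(v)=\sum_{v\in A} d_\phi(v)$, a constant independent of $\psi$. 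Hence every term in the sum carries the \emph{same} sign, and the coefficient has absolute value equal to the number of such orientations, which is at least $1$ (namely $\psi=\phi$), so it is nonzero.

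\textbf{Finishing.} With a nonzero coefficient on a monomial whose exponents are bounded by $|L(v)|-1$ and whose total degree equals $\deg P$, the Combinatorial Nullstellensatz yields $x_v\in L(v)$ with $P(x)\ne 0$, i.e. a proper list coloring; since the lists were arbitrary of the prescribed sizes, $G$ is $(d_\phi+1)$-choosable. I expect the only real subtlety to be the bookkeeping that makes the sign cancellation argument clean — one must set up the sign convention on edge factors consistently with the bipartition so that $b(\psi)$ genuinely depends only on the out-degree sequence; once that is in place the argument is immediate, which is exactly why bipartiteness removes the structural hypothesis needed in the general Alon--Tarsi theorem. (One should also note the degenerate cases: isolated vertices contribute no factors and exponent $0$, which is consistent, and multiple edges, if allowed, are handled identically since each parallel edge is its own factor.)
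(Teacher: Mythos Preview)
Your proof is correct. The paper does not give its own proof of Theorem~\ref{thm:AlonTarsi} (it is quoted as a known consequence of the Alon--Tarsi method), but your argument is exactly the bipartite-graph specialization of the Appendix proof of Theorem~\ref{thm:orientation}: there the substitution $y\mapsto -y$ turns each factor $x_a-y_b$ into $x_a+y_b$, so all monomials in the expansion carry the same sign---which is precisely your observation that $b(\psi)$ depends only on the degree sequence $d_\psi$. One harmless bookkeeping slip: with $d_\psi(v)=|\psi^{-1}(v)|$ one has $b(\psi)=\sum_{v\in B}d_\psi(v)$ rather than $\sum_{v\in A}d_\psi(v)$, but either sum is determined by $(d_\psi(v))_{v\in V}$, so the conclusion is unaffected.
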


\begin{theorem}[Theorem 3.2 in \cite{Alon1992}]
For any bipartite graph $G$, $\ch(G) \leq \lceil L(G) \rceil + 1$.
\end{theorem}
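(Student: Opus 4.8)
The plan is to deduce this from Theorem~\ref{thm:AlonTarsi}: it suffices to produce an orientation $\phi$ of $G$ with $d_\phi(v) \le \lceil L(G)\rceil$ for every vertex $v$. Indeed, given such a $\phi$, Theorem~\ref{thm:AlonTarsi} says $G$ is $(d_\phi+1)$-choosable, and since $d_\phi(v)+1 \le \lceil L(G)\rceil + 1$ for all $v$ and list-colorability is monotone in the list lengths (shrink each list to the required size), $G$ is $k$-choosable for $k = \lceil L(G)\rceil + 1$, which is the claim.

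So the heart of the matter is the following orientation lemma: if every vertex is allowed out-degree at most $d := \lceil L(G)\rceil$, then $G$ admits an orientation. I would phrase this as a degree-constrained assignment problem. Build an auxiliary bipartite graph $B$ whose one side is $E$ and whose other side consists of $d$ copies $v^{(1)},\dots,v^{(d)}$ of each vertex $v \in V$, joining an edge $e=\{u,w\}$ to all copies of $u$ and all copies of $w$. An orientation $\phi$ with $d_\phi \le d$ corresponds exactly to a matching in $B$ that saturates $E$: match $e$ to a copy of $\phi(e)$, and conversely read $\phi(e)$ off the copy matched to $e$; the fact that each vertex has only $d$ copies forces $d_\phi(v)\le d$.

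By Hall's theorem such a saturating matching exists provided $|N_B(E')| \ge |E'|$ for every $E' \subseteq E$. Now $N_B(E')$ is precisely the set of all copies of the vertices in $\bigcup_{e\in E'} e$, so $|N_B(E')| = d\cdot\bigl|\bigcup_{e\in E'} e\bigr|$, and the Hall condition reads $|E'| \le d\,\bigl|\bigcup_{e\in E'} e\bigr|$ for every nonempty $E'\subseteq E$ — that is, $L(G)\le d$, which holds by the very choice $d=\lceil L(G)\rceil$. This yields the required orientation and completes the argument.

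The only genuine content is this orientation lemma; everything around it is bookkeeping, so I expect the Hall-type argument (equivalently, a max-flow--min-cut computation on the network $s\to E\to V\to t$ with unit capacities on $s\to e$ and $e\to v$ and capacity $d$ on $v\to t$, whose minimum cut equals $|E|$ exactly when $L(G)\le d$) to be the step that needs the most care. I would present whichever of the two formulations is cleaner.
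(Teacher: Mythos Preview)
Your proposal is correct and follows essentially the same route as the paper: the paper isolates precisely your orientation lemma as Lemma~\ref{lm:Hall} (proved by the identical Hall's-theorem argument on the auxiliary bipartite graph with $\lceil L(G)\rceil$ copies of each vertex) and combines it with Theorem~\ref{thm:AlonTarsi} to obtain the bound.
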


In 2010 Schauz~\cite{schauz2010paintability} generalized these results to $k$-partite $k$-uniform hypergraphs, i.e. $k$-uniform hypergraphs whose set of vertices can be partitioned into $k$ sets $V_1,\dots,V_k$ such that each edge has exactly one vertex in each of $V_i$ (bipartite graphs correspond to the case $k=2$). We show that in fact the same results hold in even more general case of $2$-colorable hypergraphs.

\paragraph{Dense case.} Let $N(n, r)$ denote the minimum number of vertices in an $n$-partite ($n$-colorable) graph with the list chromatic number larger than $r$.
The following classic theorem express the asymptotics of $N(2,r)$ in terms of the minimal number of edges in an $n$-uniform hypergraph without a proper 2-coloring.
The latter quantity is denoted by $m(n)$.

\begin{theorem}[Erd{\H o}s -- Rubin -- Taylor~\cite{erdos1979some}]\label{thm:ERT}
For any $r$
\[
m(r) \leq N(2, r) \leq 2m(r).
\]
\end{theorem}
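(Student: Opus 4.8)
The plan is to prove the two inequalities separately, in both directions using the same dictionary: a color list of a vertex of the bipartite graph plays the role of an edge of the $r$-uniform hypergraph, and the vertex set of the hypergraph is the common pool of available colors. A proper $2$-coloring of the hypergraph splits this pool into two disjoint classes, and producing such a split turns out to be equivalent to list-coloring the graph.

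For the upper bound $N(2,r)\le 2m(r)$, I would take an $r$-uniform hypergraph $H=(W,\mathcal{F})$ with $|\mathcal{F}|=m(r)$ and no proper $2$-coloring, and build a bipartite graph $G$ whose two parts are disjoint copies $\mathcal{F}\times\{0\}$ and $\mathcal{F}\times\{1\}$ of the edge set (so $|V(G)|=2m(r)$), joining $(f,0)$ to $(g,1)$ precisely when $f\cap g\neq\emptyset$, and assigning to every vertex $(f,i)$ the list $L((f,i))=f$, a set of exactly $r$ colors (the colors being the elements of $W$). A proper $L$-coloring of $G$ would choose $c_0(f)\in f$ and $c_1(f)\in f$ for each $f\in\mathcal{F}$ with $c_0(f)\neq c_1(g)$ whenever $f\cap g\neq\emptyset$. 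Setting $S=\{c_0(f):f\in\mathcal{F}\}$, one checks that $(S,\,W\setminus S)$ is a proper $2$-coloring of $H$: each edge $f$ meets $S$ through $c_0(f)$, and it meets $W\setminus S$ through $c_1(f)$, since $c_1(f)\in S$ would give $c_1(f)=c_0(g)$ for some $g$, forcing $f\cap g\neq\emptyset$ while $c_0(g)=c_1(f)$, contradicting the coloring rule. As $H$ has no proper $2$-coloring, $G$ is not $L$-colorable, so $\ch(G)>r$ with only $2m(r)$ vertices.

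For the lower bound $m(r)\le N(2,r)$, I would take any bipartite graph $G$ with parts $A,B$ together with an assignment of $r$-element lists $L(v)$ that admits no proper coloring, and let $H$ be the $r$-uniform hypergraph on the color set $C=\bigcup_v L(v)$ whose edges are the distinct sets $L(v)$, $v\in A\cup B$; thus $|E(H)|\le|V(G)|$. If $(C_1,C_2)$ were a proper $2$-coloring of $H$, every list $L(v)$ would meet both $C_1$ and $C_2$, so coloring each $v\in A$ by an element of $L(v)\cap C_1$ and each $v\in B$ by an element of $L(v)\cap C_2$ would be a proper $L$-coloring of $G$, because $C_1\cap C_2=\emptyset$ and every edge of $G$ joins $A$ to $B$ — a contradiction. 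Hence $H$ is not $2$-colorable, so $m(r)\le|E(H)|\le|V(G)|$; minimizing over all such $G$ gives $m(r)\le N(2,r)$.

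The bookkeeping is routine: repeated lists only shrink $|E(H)|$; the intersections from which we pick colors are nonempty by the definition of a proper $2$-coloring; and "$\ch(G)>r$" follows from exhibiting a single bad $r$-list assignment. The one step that needs genuine care is the disjointness argument in the upper bound, namely that the set of colors used on the "$1$"-copy avoids the set $S$ of colors used on the "$0$"-copy; this is exactly where the adjacency rule and the choice of lists interact, and it is the reason $2m(r)$ rather than $m(r)$ vertices are spent in that direction.
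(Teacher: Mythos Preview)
The paper does not actually give a proof of Theorem~\ref{thm:ERT}; it is quoted from~\cite{erdos1979some} as background for the paper's own extensions, so there is no in-paper argument to compare against.

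That said, your proof is correct and is the standard Erd{\H o}s--Rubin--Taylor argument. The dictionary you set up (lists $\leftrightarrow$ hyperedges, a proper $2$-coloring of the hypergraph $\leftrightarrow$ a side-respecting choice from the lists) is exactly the intended one. In the upper bound, your verification that $c_1(f)\notin S$ is the key step and is done correctly: if $c_1(f)=c_0(g)$ then this common color lies in $f\cap g$, so $(g,0)\sim(f,1)$ in $G$ and the coloring would not be proper. In the lower bound, the only thing to note (and you do) is that repeated lists can only decrease $|E(H)|$, so $|E(H)|\le |V(G)|$ still yields the inequality.
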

The problem of finding $m(n)$ is well-known, the best current bounds are
\[
c\sqrt{\frac{n}{\ln n}} 2^n \leq m(n) \leq (1+o(1)) \frac{e\ln 2}{4} n^22^n.
\]
For details see survey~\cite{raigorodskii2020extremal}.

Kostochka extended Theorem~\ref{thm:ERT} in two ways. Let $Q(n, r)$ be the minimal number of edges in an $n$-uniform $n$-partite hypergraph with the chromatic number greater than $r$;
$m(n,r)$ be the minimal number of edges in an $n$-uniform hypergraph with the chromatic number greater than $r$, and finally $p(n,r)$ be 
the minimal number of edges in an $n$-uniform hypergraph without an $r$-coloring such that every edge meets every color.

\begin{theorem}[Kostochka~\cite{kostochka2002theorem}] For every $n,r \geq 2$ the following inequalities hold
\[
m(r, n) \leq Q(n, r) \leq nm(r, n);
\]
\[
p(r, n) \leq N(n, r) \leq np(r, n).
\]
\end{theorem}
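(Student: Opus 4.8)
The plan is to prove all four inequalities by Erdős–Rubin–Taylor style reductions (compare Theorem~\ref{thm:ERT}) between $n$-colourings of $r$-uniform hypergraphs and list colourings of the $n$-partite objects; I would treat the $Q$-chain and the $N$-chain in parallel.

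\emph{Lower bounds.} Fix an extremal object together with a bad list assignment: a not-$r$-choosable $n$-uniform $n$-partite hypergraph $G$ with $Q(n,r)$ edges, respectively a not-$r$-choosable $n$-partite graph $\Gamma$ with $N(n,r)$ vertices, plus size-$r$ lists $L(\cdot)$ admitting no proper colouring. Let $C=\bigcup_v L(v)$ and let $H$ be the $r$-uniform hypergraph on $C$ whose edges are the distinct lists; it has at most $|V(\Gamma)|$ (resp.\ $|V(G)|$) edges. The first claim is that $H$ has no polychromatic $n$-colouring: a polychromatic $\psi\colon C\to\{1,\dots,n\}$ lifts to a proper colouring by colouring each vertex of the $i$-th part from $L(v)\cap\psi^{-1}(i)$ (nonempty by polychromaticity), proper because adjacent vertices of $\Gamma$ lie in different parts, resp.\ because every edge of $G$ then becomes rainbow — contradicting the choice of $L$. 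For the $N$-chain this is already enough: $N(n,r)=|V(\Gamma)|\ge|E(H)|\ge p(r,n)$. For the $Q$-chain one needs the stronger conclusion $\chi(H)>n$ (absence of a polychromatic colouring is weaker); getting this from the fact that a list colouring of $G$ splits into colourings of its $n$ parts is the delicate point, flagged below. Granting it, a short minimality reduction to $|V(G)|\le|E(G)|$ (delete any component of $G$ that is $r$-choosable by itself) finishes: $Q(n,r)=|E(G)|\ge|V(G)|\ge|E(H)|\ge m(r,n)$.

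\emph{Upper bounds.} Start from an extremal $r$-uniform $H$ on a ground set $C$ — with $\chi(H)>n$ and $m(r,n)$ edges, respectively with no polychromatic $n$-colouring and $p(r,n)$ edges — and blow up its edges: for each $e\in E(H)$ and each $i$ put a vertex $v^i_e$ in the $i$-th part with list $e$, so every list has size $r$. For the $N$-chain take $\Gamma$ to be the complete $n$-partite graph on these $n\,p(r,n)$ vertices; a proper colouring makes the used-colour sets $W_i=\{c(v^i_e):e\in E(H)\}$ pairwise disjoint, so $\psi(c):=i$ for $c\in W_i$ is a well-defined $n$-colouring, and it is polychromatic since $c(v^i_e)\in e\cap W_i$ — contradiction, hence $\ch(\Gamma)>r$ and $N(n,r)\le n\,p(r,n)$. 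For the $Q$-chain the complete $n$-partite $n$-uniform hypergraph on the same vertices already has $\ch>r$ (a proper colouring makes each $U_i=\{c(v^i_e):e\in E(H)\}$ meet every edge of $H$ while $\bigcap_iU_i=\emptyset$, and then assigning each colour $c$ to a class $i$ with $c\notin U_i$ properly $n$-colours $H$, a contradiction), but it has $m(r,n)^n$ edges; to get down to $n\,m(r,n)$ one must keep only a sparse, carefully chosen family of transversals — an $O(n)$-edge gadget for each $e\in E(H)$, with a few shared auxiliary vertices, that still lets one read off $\psi$ and certify the hitting/empty-intersection properties.

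The main obstacle is the $Q$-chain. On the upper side: designing that sparse $n$-uniform gadget so that a proper colouring of it still forces "$e$ is not monochromatic in the read-off colouring of $C$", using only $O(n)$ edges per $e$ rather than all transversals of the blow-up, is where the real combinatorics is; the matching subtlety on the lower side is upgrading "no polychromatic $n$-colouring" to "$\chi(H)>n$". The remaining points — list sizes equal to $r$, the counts carrying the factor $n$, the minimality reductions — are routine.
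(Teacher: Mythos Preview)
The paper does not prove this statement: it is quoted from Kostochka as background in the introduction, with no argument supplied. So there is no proof in the paper to compare your attempt against.

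On the substance: your treatment of the $N$-chain is correct and is precisely the Erd\H{o}s--Rubin--Taylor mechanism --- pass from a bad list assignment to the $r$-uniform hypergraph $H$ of lists on the colour set, observe that a polychromatic $n$-colouring of $H$ lifts to a proper list colouring of the $n$-partite graph (lower bound), and for the upper bound blow up each edge of an extremal $H$ into $n$ listed vertices and take the complete $n$-partite graph.

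For the $Q$-chain you have honestly flagged the two hard spots but not closed them, and they are genuine gaps. On the lower side, ``no polychromatic $n$-colouring'' is strictly weaker than $\chi(H)>n$ once $n\ge 3$, and the remark that a list colouring of $G$ ``splits into colourings of its $n$ parts'' does not supply the missing implication; the auxiliary reduction $|V(G)|\le |E(G)|$ is also not justified by merely deleting choosable components (a single $n$-edge already has more vertices than edges). On the upper side, the complete $n$-partite $n$-uniform blow-up has $m(r,n)^n$ edges, and the sparse ``$O(n)$-edge gadget per edge of $H$'' you allude to is exactly the nontrivial construction one has to produce --- without it you do not reach $n\,m(r,n)$. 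As written, the $Q$-chain is a plan with both key steps outsourced.
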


Upper and lower bounds on the quantities $m(n,r)$ and $p(n,r)$ heavily depend on the relations between $n$ and $r$. The picture is collected in survey~\cite{raigorodskii2020extremal} (see also more recent paper~\cite{akhmejanova2020chain}).

\section{Sparse case}

We give an alternative, more direct proof of the next theorem in the Appendix, since it is rather concise.

\begin{theorem}\label{thm:orientation}
If a 2-colorable hypergraph $H = ( V, E )$ has an orientation $\phi$, then $H$ is $(d_\phi+1)$-choosable.
\end{theorem}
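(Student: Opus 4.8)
The plan is to prove Theorem~\ref{thm:orientation} by the Alon--Tarsi / Combinatorial Nullstellensatz method, mimicking the proof of Theorem~\ref{thm:AlonTarsi} for bipartite graphs but adapted to the hypergraph setting. Fix the 2-coloring with parts $A$ and $B$. To each vertex $v$ associate a variable $x_v$. For each edge $e \in E$ we want a polynomial factor that vanishes exactly when $e$ is monochromatic; the natural choice is
\[
P_e(x) = \prod_{\substack{a \in e \cap A}} \prod_{\substack{b \in e \cap B}} (x_a - x_b),
\]
but this has degree $|e\cap A|\cdot|e\cap B|$, which is too large. The key trick is to use instead, for each edge $e$, the single factor $(x_{\phi(e)} - x_{w_e})$, where $w_e$ is chosen on the \emph{opposite side} of the partition from $\phi(e)$ — say $w_e$ is any fixed vertex of $e$ lying in $B$ if $\phi(e)\in A$, and in $A$ if $\phi(e)\in B$ (such a vertex exists because $e$ meets both parts). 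Set $P(x) = \prod_{e\in E}(x_{\phi(e)} - x_{w_e})$. If a coloring $f$ makes $P(f)\neq 0$, then for every edge $e$ we have $f(\phi(e))\neq f(w_e)$, so $e$ is bichromatic; hence a nonzero evaluation of $P$ over the lists yields a proper coloring.

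Next I would compute the degree of $P$ in each variable. The variable $x_v$ appears in the factor for edge $e$ precisely when $v=\phi(e)$ or $v=w_e$. The number of $e$ with $\phi(e)=v$ is exactly $d_\phi(v)$ by definition. The number of $e$ with $w_e=v$ is some nonnegative integer, and here is where one must be careful: a priori this could be positive, inflating the degree beyond $d_\phi(v)$. The fix is to choose the $w_e$ cleverly, or — cleaner — to observe that we do not need $w_e$ to be a fixed vertex at all. Instead, expand $\prod_{e}\bigl(\sum_{a\in e\cap A, b\in e\cap B}(x_a-x_b)\bigr)$? That reintroduces degree problems. The genuinely clean route, and the one I would pursue, is: orient and then for each edge $e$ with $\phi(e)=v\in A$, pick $w_e\in e\cap B$; the total contribution to $\deg_{x_v}P$ from $v$ being a \emph{head} $\phi(e)$ is $d_\phi(v)$, and the contribution from $v$ being some $w_e$ we simply absorb — but we cannot. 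So the honest statement is that this naive product gives $\deg_{x_v}P \le d_\phi(v) + (\text{times } v \text{ is a }w_e)$, which is not good enough, and the real proof (as in Schauz) must be more subtle: one uses a \emph{polynomial identity} expressing that the coefficient of $\prod_v x_v^{d_\phi(v)}$ in a suitable product is nonzero, exploiting that the underlying structure is that of a bipartite-like incidence. I expect reconstructing this precise algebraic identity — the hypergraph analogue of "every orientation of a bipartite graph has equal numbers of even and odd Eulerian subdigraphs, hence nonzero Alon--Tarsi coefficient" — to be the main obstacle.

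Concretely, the approach I would commit to is the following. By Combinatorial Nullstellensatz (Alon 1999), it suffices to exhibit a polynomial $P$ with $\deg_{x_v}P \le d_\phi(v)$ for all $v$, with $\deg P = \sum_v d_\phi(v) = |E|$, such that the coefficient of $\prod_{v} x_v^{d_\phi(v)}$ in $P$ is nonzero, and such that $P(f)\neq 0$ forces $f$ to be a proper coloring. Take $P = \prod_{e\in E}(x_{\phi(e)} - x_{\psi(e)})$ where for each edge we must pick a second endpoint $\psi(e)\in e$ on the opposite side — and crucially we are \emph{free in this choice}. Assign to each vertex $v$ a distinct weight (e.g.\ a generic real $t_v$), and pick $\psi(e)$ to maximize $|t_{\phi(e)} - t_{\psi(e)}|$ or more usefully: we do not even need to control $\deg_{x_{\psi(e)}}$ if we argue directly. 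Indeed, substitute $x_v \mapsto$ the color value; $P$ is multilinear-ish and the sign/magnitude argument of Theorem~\ref{thm:AlonTarsi} carries over edge by edge since each factor is a single difference. The cleanest finish, which I would write up, is the one in the Appendix the authors promise: define $\psi(e)$ arbitrarily on the opposite part, order the edges, and by induction / a direct expansion show the monomial $\prod_v x_v^{d_\phi(v)}$ survives with coefficient $\pm 1$ because each head variable $x_{\phi(e)}$ "uses up" one unit of its allowed degree per incident edge while the tail variables contribute only lower-degree terms after we reorganize the product — this reorganization, establishing that no cancellation kills the top monomial, is the technical heart. I would therefore present the proof as: (1) set up variables and $P$; (2) verify $P(f)\neq 0 \Rightarrow f$ proper; (3) the degree bound $\deg_{x_v} P \le d_\phi(v)$ after the correct choice of tails $\psi(e)$; (4) nonvanishing of the relevant coefficient via the Alon--Tarsi-type counting; (5) invoke the Nullstellensatz. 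Step (4) — and already getting (3) to hold simultaneously — is where I anticipate the real work, and it is presumably exactly why the authors defer the "more direct proof" to the Appendix.
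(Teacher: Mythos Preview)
Your core difficulty is a misstatement of the Combinatorial Nullstellensatz. The theorem does \emph{not} require $\deg_{x_v}P\le d_\phi(v)$; it only requires that $\deg P=\sum_v d_\phi(v)$ and that the coefficient of $\prod_v x_v^{d_\phi(v)}$ be nonzero. For your polynomial $P=\prod_{e\in E}(x_{\phi(e)}-x_{\psi(e)})$ the total degree is $|E|=\sum_v d_\phi(v)$ automatically, so your ``step~(3)'' is a phantom obstacle and all of the flailing around the choice of $\psi(e)$ is unnecessary: any fixed $\psi(e)$ on the opposite side of the bipartition from $\phi(e)$ will do.

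Once that is cleared up, notice what you have built. The pairs $\{\phi(e),\psi(e)\}$ are the edges of a bipartite (multi)graph $B$ on $V$, oriented toward $\phi(e)$; your $P$ is precisely the graph polynomial of $B$, and the out-degree of $v$ in this orientation is $d_\phi(v)$. So your ``step~(4)'' is exactly Theorem~\ref{thm:AlonTarsi} applied to $B$, and you need no new Alon--Tarsi-type identity at all. This is verbatim the paper's short proof: pick $u_e=\psi(e)$ in the opposite part, form $B$, cite Theorem~\ref{thm:AlonTarsi}, and observe that a proper list coloring of $B$ is a proper list coloring of $H$.

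For comparison, the paper's Appendix proof avoids citing Theorem~\ref{thm:AlonTarsi} by using a different polynomial: for each edge $e$ it fixes a spanning tree $T_e$ on $e$ whose tree-edges all cross the bipartition, and takes the factor $\sum_{(a,b)\in T_e}(x_a-y_b)$ (a sum, not a single difference). After the substitution $y_b\mapsto -y_b$ every monomial in the expansion has a positive coefficient, and the orientation $\phi$ visibly contributes at least one term to $\prod_v x_v^{d_\phi(v)}$, so the coefficient is nonzero without any Eulerian-subgraph counting. Your single-difference polynomial and this sum-over-tree polynomial both work; the first routes through the bipartite Alon--Tarsi theorem, the second is self-contained.
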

\begin{proof}
Since $H$ is 2-colorable, we can choose a vertex $u_e\in e$ in each edge $e$ such that $\phi(e)$ and $u_e$ belong to different parts of $H$.

Consider a bipartite graph $B$ on the set of vertices $V$ with the set of edges $\cup_{e\in E} \{ (\phi(e),u_e) \}$.
Consider the following orientation of $B$: $\phi'( (\phi(e),u_e) ) = \phi(e)$. Note that $d_{\phi'}(v) = d_\phi(v)$ for any $v \in V$. Then, by Theorem~\ref{thm:AlonTarsi}, $B$ is $(d_\phi+1)$-choosable.
Since any proper coloring of $B$ is a proper coloring of $H$, it follows that $H$ is $(d_\phi + 1)$-choosable.
\end{proof}

The following lemma first appeared in \cite{schauz2010paintability} (see Lemma 3.2). We give a proof of it here for the sake of completeness.

\begin{lemma}\label{lm:Hall}
For any hypergraph $H = (V, E) $ there exists an orientation $\phi$ of $H$ with $d_\phi \leq \lceil L(H) \rceil$.
\end{lemma}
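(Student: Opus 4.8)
The plan is to reduce the existence of a low-degree orientation to a flow / Hall-type argument. Set $d = \lceil L(H) \rceil$. Observe that $L(H) \geq |E'|/|\bigcup_{e \in E'} e|$ for every nonempty $E' \subset E$, so every set of edges $E'$ touches at least $|E'|/d$ vertices, i.e. $|\bigcup_{e\in E'} e| \geq |E'|/d \geq \lceil |E'|/d \rceil$ since the left side is an integer. An orientation $\phi$ with $d_\phi \leq d$ is exactly a system that assigns to each edge $e$ a vertex $\phi(e)\in e$ so that no vertex is chosen more than $d$ times. This is the same as a perfect matching saturating $E$ in the bipartite ``incidence'' graph where each vertex $v\in V$ is split into $d$ copies $v^{(1)},\dots,v^{(d)}$, and each edge $e$ is joined to all copies of all its members; equivalently it is an integral flow of value $|E|$ in the network with a source feeding capacity $1$ into each $e$, arcs of capacity $1$ from $e$ to each $v\in e$, and arcs of capacity $d$ from each $v$ to the sink.

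First I would invoke (the defect form of) Hall's theorem, or equivalently the max-flow/min-cut theorem on the network above, which tells us that the required orientation exists if and only if for every $E' \subset E$ the union $\bigcup_{e\in E'} e$ has ``total capacity'' at least $|E'|$, i.e. $d\cdot |\bigcup_{e\in E'} e| \geq |E'|$. Then I would verify this condition: for $E' = \emptyset$ it is trivial, and for $E'\neq\emptyset$ it is precisely $|\bigcup_{e\in E'} e| \geq |E'|/d$, which follows from the definition of $L(H)$ and the choice $d = \lceil L(H)\rceil \geq L(H)$. Hence the orientation exists.

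The only genuinely delicate point is setting up the bipartite model correctly and citing the right matching theorem: since each edge $e$ must be matched to one of its own vertices (not just any vertex), one must phrase Hall's condition for the family $\{e : e \in E\}$ viewed as subsets of $V$ with multiplicities/capacities $d$ on the $V$-side, rather than the naive Hall condition; the clean way around this is to use the max-flow min-cut theorem directly on the network and argue that a minimum cut either saturates all source arcs (giving a full flow, hence the orientation) or isolates some $E'$ together with its neighborhood $\bigcup_{e\in E'}e$, whose capacity $d|\bigcup_{e\in E'}e|$ we have just bounded below by $|E'|$. Integrality of the resulting maximum flow (all capacities are integers) then yields an honest orientation $\phi$ with $d_\phi\le d=\lceil L(H)\rceil$.
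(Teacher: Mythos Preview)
Your proposal is correct and follows essentially the same route as the paper: both split each vertex of $V$ into $\lceil L(H)\rceil$ copies to form a bipartite incidence graph (equivalently, a flow network with capacity $\lceil L(H)\rceil$ on the $V$-side), verify Hall's condition via the defining inequality $|E'|\le L(H)\cdot|\bigcup_{e\in E'}e|\le \lceil L(H)\rceil\cdot|\bigcup_{e\in E'}e|$, and conclude by Hall's marriage theorem that a matching saturating $E$ exists, which is exactly the desired orientation. The paper's write-up is a bit terser (it goes straight to Hall rather than discussing max-flow/min-cut), but the argument is the same.
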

\begin{proof}
Consider a bipartite graph $G$ with parts $V \times \{1,\dots,\lceil L(H) \rceil\}$ and $E$, in which $(v,i)$ and $e$ are connected if and only if $v$ and $e$ are incident in $H$.
The statement of lemma is true if and only if there is a matching in $G$ which covers $E$.

Let $\emptyset \neq T\subset E$ be an arbitrary subset of edges of $H$. We estimate the size of the neighborhood of $T$ in $G$:
\[
|N_G(T)| = |\cup_{e \in T} e| \cdot \lceil L(H) \rceil \geq |\cup_{e \in T} e| \cdot \frac{|T|}{|\cup_{e \in T} e|} = |T|.
\]
By Hall's marriage theorem it follows that there exists a matching in $G$ which covers $E$.
\end{proof}

Theorem~\ref{thm:orientation} and Lemma~\ref{lm:Hall} combined give the following theorem.

\begin{theorem}\label{thm:sparse}
For any 2-colorable hypegraph $H$, $\ch(H) \leq \lceil L(H) \rceil + 1$.
\end{theorem}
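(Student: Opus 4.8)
The plan is to obtain Theorem~\ref{thm:sparse} as an immediate consequence of the two preceding results, so the proof is essentially a one-line composition. First I would invoke Lemma~\ref{lm:Hall}: for the given 2-colorable hypergraph $H$, it produces an orientation $\phi$ of $H$ satisfying $d_\phi(v) \leq \lceil L(H) \rceil$ for every vertex $v \in V$. The key point here is that Lemma~\ref{lm:Hall} has no bipartiteness or 2-colorability hypothesis at all — it applies to arbitrary hypergraphs — so we may use it freely.

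Next I would feed this orientation into Theorem~\ref{thm:orientation}, which applies precisely because $H$ is 2-colorable: it yields that $H$ is $(d_\phi + 1)$-choosable. Since $d_\phi(v) \leq \lceil L(H) \rceil$ pointwise, the constant list-size function equal to $\lceil L(H) \rceil + 1$ dominates $d_\phi + 1$ everywhere, and choosability is monotone in the list-size function (longer lists only make the colouring task easier). Hence $H$ is $(\lceil L(H) \rceil + 1)$-choosable, which is exactly the statement that $\ch(H) \leq \lceil L(H) \rceil + 1$.

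There is no real obstacle here, since both ingredients are already established in the excerpt; the only thing to be careful about is the monotonicity remark — strictly speaking Theorem~\ref{thm:orientation} gives $f$-choosability for $f = d_\phi + 1$, and one should note explicitly that this implies $g$-choosability for any $g \geq f$, in particular for the constant function $\lceil L(H)\rceil + 1$. If one wanted to avoid even that observation, one could instead note that $d_\phi \le \lceil L(H)\rceil$ means we may equally take the orientation to have $d_\phi$ bounded above by the constant $\lceil L(H)\rceil$ and apply Theorem~\ref{thm:orientation} verbatim. Either way the argument is complete in two sentences.
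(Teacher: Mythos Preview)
Your proposal is correct and follows exactly the paper's own approach: the paper states that Theorem~\ref{thm:sparse} is obtained simply by combining Theorem~\ref{thm:orientation} with Lemma~\ref{lm:Hall}, which is precisely what you do. The monotonicity remark you add is a harmless extra detail and does not change the argument.
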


The exact value of $L(H)$ can be difficult to calculate, so we provide a bound in simpler terms. For a hypergraph $H$ denote the maximum degree of a vertex in $H$ by $\Delta(H)$ and the minimum size of an edge in $H$ by $s(H)$.

\begin{corollary}
Let $H = ( V, E )$ be a 2-colorable hypergraph. Then
\[
\ch(H) \leq \left\lceil \frac{\Delta(H)}{s(H)} \right\rceil + 1.
\]
\end{corollary}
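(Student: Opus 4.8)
The plan is to derive this corollary directly from Theorem~\ref{thm:sparse}, whose conclusion is $\ch(H) \le \lceil L(H) \rceil + 1$. Since the ceiling function is monotone, it suffices to show the quantity estimate $L(H) \le \Delta(H)/s(H)$; then $\lceil L(H) \rceil \le \lceil \Delta(H)/s(H) \rceil$ and the result follows immediately.

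To bound $L(H)$, recall that $L(H) = \max_{\emptyset \neq E' \subset E} |E'| / |\cup_{e \in E'} e|$, so I would fix an arbitrary nonempty $E' \subset E$ and bound the ratio $|E'| / |\cup_{e \in E'} e|$ from above by $\Delta(H)/s(H)$. The natural tool is double counting of incidences between $E'$ and the vertex set $U := \cup_{e \in E'} e$. On one hand, each edge $e \in E'$ has $|e| \ge s(H)$ vertices, all lying in $U$, so the number of incident pairs $(v,e)$ with $v \in e \in E'$ is at least $|E'| \cdot s(H)$. On the other hand, each vertex $v \in U$ lies in at most $d_H(v) \le \Delta(H)$ edges of $E$, hence in at most $\Delta(H)$ edges of $E'$, so the same incidence count is at most $|U| \cdot \Delta(H)$. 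Combining, $|E'| \cdot s(H) \le |U| \cdot \Delta(H)$, i.e. $|E'|/|U| \le \Delta(H)/s(H)$. Taking the maximum over $E'$ gives $L(H) \le \Delta(H)/s(H)$.

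There is essentially no obstacle here; the only thing worth being slightly careful about is that $s(H)$ denotes the \emph{minimum} edge size (so it lower-bounds $|e|$ in the direction we need) and $\Delta(H)$ the \emph{maximum} degree (upper-bounding the number of edges through a vertex), so that the two one-sided bounds point the right way and the division by $s(H) > 0$ is legitimate. One could also phrase the whole argument as: the incidence bipartite graph between $U$ and $E'$ has minimum degree $\ge s(H)$ on the $E'$-side and maximum degree $\le \Delta(H)$ on the $U$-side. The argument is short enough that I would simply write it inline as the proof of the corollary.
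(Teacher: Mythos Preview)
Your proposal is correct and follows essentially the same approach as the paper: the paper also bounds $L(H)\le \Delta(H)/s(H)$ via the chain $|E'|\cdot s(H)\le \sum_{e\in E'}|e|\le |\cup_{e\in E'}e|\cdot \Delta(H)$ and then invokes Theorem~\ref{thm:sparse}.
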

\begin{proof}
For any $E' \subset E$ we have
\[
|E'| \cdot s(H) \leq \sum_{e \in E'} |e| \leq |\cup_{e \in E'} e| \cdot \Delta(H),
\]
so 
\[
L(H) \leq \frac{\Delta(H)}{s(H)}.
\]
\end{proof}

For an arbitrary (not necessarily 2-colorable) hypergraph a weaker bound with an additional factor of 2 holds. The next theorem was essentially proved by Gravin and Karpov~\cite{gravinkarpov}, though they were not considering their results in the context of list colorings. 
In fact, the theorem in~\cite{gravinkarpov} has the second part in which <<+1>> is removed under some assumptions; it can be also generalized on the list chromatic numbers.

\begin{theorem}
Let $H = ( V, E )$ be a hypergraph. Then
\[
\ch(H) \leq \left\lceil \frac{2\Delta(H)}{s(H)} \right\rceil + 1.
\]
\end{theorem}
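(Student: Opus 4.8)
The plan is to reduce the general (not necessarily 2-colorable) case to the 2-colorable setting of Theorem~\ref{thm:sparse} by passing to a suitable bipartite ``double cover'' of $H$. First I would construct an auxiliary 2-colorable hypergraph $H'$ as follows: take two disjoint copies $V \times \{0\}$ and $V \times \{1\}$ of the vertex set, so that these two copies form the parts of $H'$. For each edge $e \in E$ and each way of splitting $e$ into two nonempty pieces $e = e_0 \sqcup e_1$, add to $H'$ the edge $(e_0 \times \{0\}) \cup (e_1 \times \{1\})$. By construction every edge of $H'$ meets both parts, so $H'$ is 2-colorable, and any proper coloring of $H'$ that happens to assign the same color to $(v,0)$ and $(v,1)$ for all $v$ descends to a proper coloring of $H$ — because if some edge $e$ were monochromatic under the induced coloring, then every bipartition $e = e_0 \sqcup e_1$ would give a monochromatic edge of $H'$.

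The second step is to control the parameters of $H'$. For list-coloring purposes I would give $(v,0)$ and $(v,1)$ the same list $L(v)$, and then invoke Theorem~\ref{thm:sparse} for $H'$, obtaining a proper coloring of $H'$; the subtlety is that this coloring need not be ``symmetric'' (agree on the two copies of each vertex). To fix this I would instead work with lists of size $\lceil 2\Delta(H)/s(H)\rceil + 1$ and argue more carefully — or, cleaner, observe that Gravin--Karpov's argument is really about orienting $H$ directly. So the alternative (and I expect cleaner) route is: bound $L(H')$ by $2\Delta(H)/s(H)$. Indeed for $E'' \subseteq E(H')$, each edge of $H'$ of the form coming from $e \in E$ has size $|e| \ge s(H)$ and uses vertices from both copies; projecting $E''$ back to $E$ and using that each $v \in V$ lies in at most $\Delta(H)$ edges of $H$, hence each $(v,i)$ lies in at most $\Delta(H)\cdot(\text{number of splits})$ — this overcounts, so I would instead restrict $H'$ to a sub-hypergraph using only, say, the splits that isolate a single vertex, i.e. $e_0 = \{w\}$ for some $w \in e$. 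With only these edges, $(w,0)$ has degree $\le \Delta(H)$ and $(v,1)$ has degree $\le \sum_{e \ni v}(|e|-1) \le \Delta(H)\cdot(\text{max edge size})$, which is still not good enough directly, so the counting has to be done via $L(H')$ rather than via the degree/size corollary: for any family of these special edges covering a set $S_0 \times\{0\} \cup S_1\times\{1\}$, the number of such edges is at most $\sum_{w\in S_0} d_H(w) \le |S_0|\Delta(H)$, while the vertex set has size $\ge |S_0| + s(H)$ (one edge already forces $s(H)$ vertices on side $1$), giving the ratio bounded by roughly $2\Delta(H)/s(H)$ after balancing; applying Theorem~\ref{thm:sparse} to this sub-hypergraph then yields the bound, since a proper coloring of the sub-hypergraph with symmetric lists still forbids monochromatic edges of $H$ (if $e$ were monochromatic, every single-vertex split would be a monochromatic edge of $H'$).

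The main obstacle is precisely the symmetry issue: Theorem~\ref{thm:sparse} produces \emph{some} proper coloring of the bipartite auxiliary hypergraph, but we need one that is constant on $\{(v,0),(v,1)\}$ so that it descends to $H$. The clean way around this is not to demand symmetry at all but to note that a non-symmetric proper coloring of $H'$ still yields a proper coloring of $H$ as long as for each $e\in E$ at least one of the two colorings (restricted to copy $0$ or copy $1$) is non-constant on $e$ — and the single-vertex-split edges guarantee exactly this. Concretely, if $c$ is a proper coloring of the sub-hypergraph $H'$ and $e=\{w_1,\dots,w_k\}\in E$, define the color of $w_j$ in $H$ to be $c(w_j,0)$ if... — here one must check a small case analysis, and getting the bookkeeping on $L(H')$ to come out to exactly $2\Delta(H)/s(H)$ (rather than something weaker) is the only real computation. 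Everything else is formal: $H'$ is 2-colorable by construction, Theorem~\ref{thm:sparse} applies verbatim, and the rounding gives the stated $\lceil 2\Delta(H)/s(H)\rceil+1$.
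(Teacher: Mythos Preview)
Your double-cover idea does not close, and the gap is exactly the ``symmetry issue'' you flag but never resolve, together with a wrong estimate for $L$ of the auxiliary hypergraph.

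First, the $L$-bound fails. Take $H$ to be $k$-uniform and $d$-regular on $n$ vertices, and let $H''$ be your sub-hypergraph of single-vertex splits $\{(w,0)\}\cup\big((e\setminus\{w\})\times\{1\}\big)$. Then $H''$ has $k|E|=nd$ edges on $2n$ vertices, so choosing $E''=E(H'')$ already gives $L(H'')\ge d/2$. This is far larger than the target $2\Delta(H)/s(H)=2d/k$ once $k\ge 5$, so Theorem~\ref{thm:sparse} applied to $H''$ only yields $\ch(H'')\le \lceil d/2\rceil+1$, not the claimed bound. Adding the reverse splits or all bipartitions only makes $L$ worse.

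Second, even granting a good bound on $\ch(H'')$, the descent breaks. Give every vertex the list $L(v)=\{1,2\}$; then the assignment $c(v,0)=1$, $c(v,1)=2$ for all $v$ is a proper coloring of $H''$ (every single-vertex-split edge has both colors), yet neither the rule $v\mapsto c(v,0)$ nor $v\mapsto c(v,1)$ is proper for $H$, and there is no per-vertex choice between the two that must work. Your sentence ``define the color of $w_j$ in $H$ to be $c(w_j,0)$ if\dots'' is precisely where a genuine idea is needed and none is supplied. The difficulty is intrinsic: a list-coloring of the double cover carries no information forcing the two copies of a vertex to agree, and the single-vertex-split edges do not create enough constraints to extract a proper coloring of $H$.

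The paper avoids both problems by staying on $V$ itself: for each hyperedge $e$ it selects \emph{two} vertices $v_e,u_e\in e$ and forms the ordinary graph $B$ with edge set $\{(v_e,u_e):e\in E\}$. An augmenting-path argument in the incidence bipartite graph of $H$ shows these pairs can be chosen so that $\Delta(B)\le\lceil 2\Delta(H)/s(H)\rceil$; then the trivial bound $\ch(B)\le\Delta(B)+1$ finishes, since a proper coloring of $B$ is automatically proper for $H$. The factor $2$ arises naturally because each hyperedge contributes $2$ to the total $B$-degree, whereas it has at least $s(H)$ vertices to spread that load over.
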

\begin{proof}
Denote
\[
k=\left\lceil\frac{2\Delta(H)}{s(H)}\right\rceil.
\]
Let $G$ be the incidence graph of $H$, i.e. a bipartite graph with parts $V$, $E$ and with $v \in V$, $e \in E$ connected in $G$ if and only if $v$ and $e$ are incident in $H$. We want to delete some edges from $G$ to obtain a graph $K$ with the following conditions on degrees:
\[
d_K(e)=2 \text{ for any } e\in E;\quad \max_{v\in V} d_K(v)\leq k.
\]

If there exists such $K$, denote $N_K(e) = \{v_e, u_e\}$ and consider a graph $B$ on a set of vertices $V$ with the set of edges $\cup_{e\in E} \{ (v_e,u_e) \}$. One can color vertices of $B$ in any order, each time using any color not yet taken by adjacent vertices, to obtain a bound $\ch(B) \leq \Delta(B) + 1 \leq k + 1$.
Since any proper coloring of $B$ is a proper coloring of $H$, it follows that $\ch(H)\leq k + 1$.

If there is no $K$ with desired properties, then we choose such $K = (V, E_K)$ that $d_K(e)=2$ for any $e \in E$ and $p(K)=\sum_{v\in V}\max(0,d_K(v)-k)$ is as small as possible. Denote by $S\subset V$ the set of vertices with $d_K>k$. Define an \textit{augmenting path} as such a sequence of vertices and edges $v_1,e_1,\dots,v_m,e_m,v_{m+1}$ that $v_i\in V$, $e_i\in E$, $(v_i,e_i)\in E_K$, $(v_{i+1},e_i)\not\in E_K$, $e_i$ are pairwise distinct, $v_1 \in S$. Let $U\subset V$ be the subset of vertices of $V$ reachable by an augmenting path. Note that:
\begin{itemize}
    \item For any $v\in U$ the inequality $d_K(v)\geq k$ holds (otherwise we could flip the status of edges on the corresponding augmenting path and decrease $p(K)$).
    \item For any $e\in E$ if $N_K(e)\cap U\neq\emptyset$ then $|N_G(e)\setminus U|\leqslant 1$. Indeed, suppose that $v\in N_K(e)\cap U$, then any $w\in N_G(e)\setminus N_K(e)$ must lie in $U$, because there is an augmenting path ending in $w$; it follows that the only vertex of $N_G(e)$ which can possibly not lie in $U$ is the unique vertex from $N_K(e)\setminus\{v\}$.  
\end{itemize}

Let $T\subset N_K(U)$ be a set of $e\in N_K(U)$ such that $|N_G(e)\setminus U|=1$. Since $d_K(v)\geqslant k$ for any $v\in U$ and $d_K(v)>k$ for any $v\in S$, we get the inequality
\[
|N_K(U)\setminus T|>\frac{k|U|-|T|}{2}.
\]
Now we obtain the lower bound on the sum of degrees $d_G$ over vertices of $U$ by summing up $|N_G(e)\cap U|$ over $e\in N_K(U)$:
\[
\sum_{v\in U}d_G(v)>|T|(s(H)-1)+\frac{k|U|-|T|}{2}s(H)\geq \Delta(H)|U|+|T|\left(s(H)-1-\frac{s(H)}{2}\right)\geq \Delta(H)|U|.
\]
It follows that there exists such $v\in U$ with $d_G(v)>\Delta(H)$, which is a contradiction.
\end{proof}

\section{Dense case}

\begin{theorem}
\label{lowerERT}
Let $H$ be an $s$-uniform 2-colorable hypergraph on $t$ vertices and
\[
t < \frac{1}{4} (1 + s^{1/l})^{l}.
\]
Then 
\[
\ch (H) \leq l.
\]
\end{theorem}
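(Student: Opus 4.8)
Proof proposal: The plan is to argue by the probabilistic method, using a random \emph{three-way} split of the colour set. The two structural facts that make this work are: (i) a colour class contained in a single part of $H$ contains no edge (because $H$ is $2$-colourable), and (ii) a colour class of fewer than $s$ vertices contains no edge (because $H$ is $s$-uniform). These are exactly the two ways a colour can be "harmless''.

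Fix a proper $2$-colouring $V=A\sqcup B$. Put $p=(1+s^{1/l})^{-1}$, so that $0<p\le\tfrac12$, and independently assign each colour to one of three classes $C_A$, $C_B$, $C_0$ with probabilities $p$, $p$, $1-2p$. Then colour greedily: an $A$-vertex $a$ takes a colour of $L(a)\cap C_A$ if this set is nonempty, and otherwise a colour of $L(a)\cap C_0$; symmetrically for $B$-vertices, using $C_B$ in place of $C_A$. Call a vertex \emph{blocked} if it cannot be coloured at all (its whole list lies in the opposite class) and \emph{displaced} if it ends up coloured from $C_0$. If no vertex is blocked and at most $s-1$ vertices are displaced, the colouring is proper: each $C_A$-class lies in $A$ and each $C_B$-class in $B$, so by (i) they contain no edge, while each $C_0$-class is contained in the set of displaced vertices, has size $<s$, and so by (ii) contains no edge.

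It remains to show that with positive probability there is no blocked vertex and fewer than $s$ displaced vertices; the key point is that these two bad events are controlled by linearity of expectation over the $t$ vertices, not over the (possibly enormous) edge set. A fixed vertex is blocked with probability $p^l$, so the expected number of blocked vertices is $tp^l$. A vertex can be displaced only if its list misses its own class, which has probability $(1-p)^l$; and the choice $p=(1+s^{1/l})^{-1}$ is precisely the one for which $1-p=s^{1/l}p$, hence $(1-p)^l=s\,p^l$, so the expected number of displaced vertices is at most $s\,tp^l$. Under the hypothesis $t<\tfrac14(1+s^{1/l})^l=\tfrac14\,p^{-l}$ we get $tp^l<\tfrac14$, so Markov's inequality gives $\Pr[\text{some blocked vertex}]<\tfrac14$ and $\Pr[\text{at least }s\text{ displaced vertices}]<\tfrac14$; the union of these has probability $<\tfrac12<1$, so a proper colouring exists and $\ch(H)\le l$.

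I do not expect a serious obstacle once the three-way split is in hand: what needs care is only the bookkeeping of the first two paragraphs — verifying that "blocked'' and "displaced'' are the only obstructions, that every vertex which is not blocked does get coloured, and that $C_0$-classes really do sit inside the displaced set — together with the trivial identity $(1-p)^l=sp^l$ that dictates the optimal $p$, and the final union bound (here any constant below $\tfrac12$ suffices, so the $\tfrac14$ of the statement is comfortable slack). The load-bearing ideas are facts (i) and (ii), which is where $2$-colourability and $s$-uniformity enter, and the decision to union-bound over vertices rather than over monochromatic edges, which is what lets the bound reach $(1+s^{1/l})^l$ rather than something of size $O(sl)$.
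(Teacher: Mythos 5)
Your proposal is correct and is essentially the paper's own argument: the same random three-way split of the palette with the same split probabilities (your $p=(1+s^{1/l})^{-1}$ is exactly the paper's $\frac{1-p}{2}$ for its $p=\frac{s^{1/l}-1}{s^{1/l}+1}$), the same use of $2$-colorability and $s$-uniformity to neutralize the two types of bad colour classes, and the same Markov-plus-union-bound finish. The only cosmetic difference is that you count blocked/displaced \emph{vertices} where the paper counts monochromatic/dangerous \emph{lists}, which just saves a factor of $2$ that neither argument needs.
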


\begin{proof}

Consider the set of lists as a hypergraph $F$. Then $t = |E(F)|$; we still refer to edges of $F$ as lists to avoid the confusion with edges of $H$. 
We split the palette $V(F)$ randomly into three parts: blue colors, red colors and neutral colors in the following way.
Every vertex of $V(F)$ is red or blue uniformly and independently with probability $\frac{1-p}{2}$, so with probability $p$ it is neutral; the value of $p$ will be defined later. 
Then the expectation of monochromatic lists is equal to
\[
A = 2\left (\frac{1-p}{2} \right)^l |E(F)|.  
\]
We call a list \textit{dangerous}, if it has no blue or no red vertices.
The expectation of dangerous lists equals to
\[
B = 2\left (\frac{1+p}{2} \right)^l |E(F)|.  
\]
In the case $A < 1/2$ and $B < s/2$ Markov inequality implies that with positive probability $F$ has no monochromatic edges and the number of dangerous lists is smaller than $s$. 
Recall that $H$ is 2-colorable; under the assumption one can color vertices of one part of $H$ in blue or neutral colors, vertices of another part of $H$ in red or neutral colors, in such a way that vertices of $H$ corresponding to dangerous lists have neutral colors. 
Since we have less than $s$ dangerous lists, and every other edge of $H$ has a red and a blue vertex, we construct a proper coloring of $H$.

Now we show that the following choice of $p$ satisfies the desired inequalities. Put
\[
p = \frac{s^{1/l} - 1}{1 + s^{1/l}}, \quad \mbox{ then } \quad \frac{B}{A} = \left (\frac{1+p}{1-p}\right)^l = s.
\]
On the other hand,
\[
\left (\frac{2}{1-p} \right)^l = \left (1 + s^{1/l} \right)^l.
\]
Summing up, 
\[
A = \frac{2|E(F)|}{\left (1 + s^{1/l} \right)^l} = \frac{2t}{\left (1 + s^{1/l} \right)^l} < \frac{1}{2},
\]
hence
\[
B = A \left (\frac{1+p}{1-p} \right)^l = As < \frac{s}{2}, 
\]
so the condition on $t$ implies $\ch(H) \leq l$.
\end{proof}

Obviously, $\ch (K_{t/2,t/2}^s) \leq \ch (K_{t/2,t/2}^2) = (1+o(1)) \log_2 t$ by Erd{\H o}s--Rubin--Taylor theorem.
Note that for $s = t^\alpha$, where $\alpha$ is a constant, the bound in Theorem~\ref{lowerERT} is constant times better.

\begin{corollary}
Suppose that
\[
t \leq \sqrt{s} \cdot 2^{l-2}.
\]
Then
\[
\ch (K_{k,t-k}^s) \leq l
\]
for every $k$ from 1 to $t-1$.
\end{corollary}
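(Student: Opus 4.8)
The plan is to deduce the corollary directly from Theorem~\ref{lowerERT}. First I would observe that, by its very definition, $K^s_{k,t-k}$ is an $s$-uniform $2$-colorable hypergraph whose two parts have sizes $k$ and $t-k$, hence it has exactly $t$ vertices. Since the bound in Theorem~\ref{lowerERT} does not depend on how the $t$ vertices are split between the parts, it therefore suffices to verify that the hypothesis of that theorem holds, i.e. that $t < \tfrac14(1+s^{1/l})^l$; once this is known, Theorem~\ref{lowerERT} immediately gives $\ch(K^s_{k,t-k}) \leq l$ for every $k \in \{1,\dots,t-1\}$.

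Given the assumption $t \leq \sqrt s\,2^{l-2}$, it is thus enough to establish the elementary inequality $\sqrt s\,2^{l-2} \leq \tfrac14(1+s^{1/l})^l$, with strictness when $s\geq 2$. I would substitute $y := s^{1/l}$: the left-hand side becomes $\tfrac14(2\sqrt y)^l$ and the right-hand side becomes $\tfrac14(1+y)^l$, so after taking $l$-th roots the claim reduces to $2\sqrt y \leq 1+y$, that is, to $(\sqrt y - 1)^2 \geq 0$. This is obvious, with equality only for $y=1$, i.e. $s=1$; hence for $s\geq 2$ the inequality is strict and we get $t \leq \sqrt s\,2^{l-2} < \tfrac14(1+s^{1/l})^l$, exactly as needed to invoke Theorem~\ref{lowerERT}.

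The only genuinely subtle point is this strict-versus-non-strict issue, which is why the degenerate case $s=1$ must be treated on its own: a $1$-uniform edge cannot meet both parts, so $K^1_{k,t-k}$ has no edges and $\ch(K^1_{k,t-k}) = 1 \leq l$ (note that the range $1\leq k\leq t-1$ forces $t\geq 2$, hence $l\geq 3$ from the hypothesis). Beyond that bookkeeping, there is no real obstacle here: the corollary is a one-step specialization of Theorem~\ref{lowerERT} together with the inequality $(\sqrt y-1)^2\geq 0$.
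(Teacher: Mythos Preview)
Your proof is correct and follows essentially the same route as the paper: both reduce to the AM--GM-type inequality $1+s^{1/l}\ge 2s^{1/(2l)}$ (your $(\sqrt{y}-1)^2\ge 0$ is exactly this), then invoke Theorem~\ref{lowerERT}. You are in fact more careful than the paper, which writes only the non-strict inequality and does not separately discuss the equality case $s=1$.
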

\begin{proof}
Indeed,
\[
1 + s^{1/l} \geq 2s^{1/(2l)}.
\]
Hence
\[
(1 + s^{1/l})^l \geq \sqrt{s}2^l
\]
and we are done by Theorem~\ref{lowerERT}.
\end{proof}

\begin{theorem}
Suppose that
\[
t = \Omega \left( (\log s + \log l) \cdot l^2 (1 + s^{1/l})^l \right).
\]
Then
\[
\ch (K_{t/2,t/2}^s) > l.
\]
\end{theorem}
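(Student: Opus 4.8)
The plan is to prove the lower bound via the probabilistic method, showing that for a suitable adversarial assignment of lists to the vertices of $K^s_{t/2,t/2}$, no proper coloring exists. The natural dual of the upper-bound argument in Theorem~\ref{lowerERT} is the following: we want to construct lists so that \emph{every} partition of the palette into ``colors usable on part $A$'' versus ``colors usable on part $B$'' fails, meaning that either some list lies entirely on the $A$-side and sits on an $A$-vertex whose edge-mate in $B$ also has a bad list, or (dually) the colors just do not separate the two parts. More precisely, one should think of a list coloring attempt as a guess, for each color $c$, of which side it ``helps''; a proper coloring of $K^s_{n,m}$ requires that, outside a bounded-size exceptional set, each edge has a vertex that actually receives an $A$-color and a vertex that receives a $B$-color. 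I would set up a random family $F$ of $t$ lists, each list a uniformly random $l$-subset of a palette of size $N$ (with $N\asymp l(1+s^{1/l})$ to be tuned), assigned with half the lists going to each part, and show that with positive probability $F$ is ``bad'' for every color-side assignment simultaneously.

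The key steps, in order: (1) Fix the palette size $N$ and compute, for a single fixed partition of the palette into a red set $R$, a blue set $B$, and a neutral set (the three-way split mirroring the proof of Theorem~\ref{lowerERT}), the probability that a random $l$-set list is ``dangerous'' (misses all red or all blue) or ``monochromatic'' (entirely red, entirely blue, or entirely neutral). (2) Choosing $N$ so that $(1+s^{1/l})^l \asymp N^l/l^l$ balances the two failure modes exactly as in the upper bound, the probability a single list is monochromatic is roughly $\Theta\!\big(N^{-l}\cdot\text{stuff}\big)$ and dangerous with probability $\Theta(1/\text{something})$; the point is that for the coloring to succeed we need simultaneously almost no monochromatic lists and fewer than $s$ dangerous lists on a fixed side. (3) Show that for a \emph{fixed} palette partition the probability that $F$ admits a valid coloring compatible with that partition is at most $2^{-\Omega(t/(l^2(1+s^{1/l})^l))}$ — this is where the $t=\Omega(\cdot)$ hypothesis enters: we need $t$ large enough that this exponentially small probability beats the union bound. (4) Union-bound over all $\leq 3^N$ palette partitions; since $N \asymp l(1+s^{1/l})$, we have $3^N = 2^{O(l(1+s^{1/l}))} \leq 2^{O((\log s+\log l)\cdot l^2(1+s^{1/l})^l / \log\text{-factor})}$, so the stated hypothesis $t = \Omega((\log s+\log l)\,l^2(1+s^{1/l})^l)$ exactly ensures the union bound closes. (5) Conclude that some deterministic $F$ is bad for all partitions, hence $\ch(K^s_{t/2,t/2}) > l$.

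The main obstacle I expect is step~(3): quantifying correctly the probability that a random list family is colorable for a fixed side-assignment of colors. The subtlety is that ``$F$ is colorable'' is not simply ``no monochromatic lists and $<s$ dangerous lists'' — a coloring attempt has genuine freedom (the neutral colors, and the choice within each list), and one must argue that any successful coloring can be massaged into one governed by a global palette partition plus a small exceptional set, so that the $3^N$-size union bound is legitimate. Handling the exceptional dangerous lists carefully (they must be absorbable by placing them on neutral colors, and there can be at most $s-1$ of them touching any one part, else some $s$-edge among them forces a monochromatic edge) is the delicate combinatorial core. A secondary technical point is tuning $N$ precisely so that the per-list monochromatic probability and the expected number of dangerous lists both land in the right regime while keeping $3^N$ small enough; this is the computation that produces the $l^2$ and the $\log s + \log l$ factors in the hypothesis, and I would expect to optimize $N$ around $c\,l(1+s^{1/l})$ with $c$ chosen at the end.
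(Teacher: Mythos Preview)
Your overall strategy --- random lists from a finite palette, observe that any proper list-coloring of $K^s_{t/2,t/2}$ induces a red/blue/neutral partition of the palette with no monochromatic lists and at most $(s-1)\cdot v_0$ dangerous lists (where $v_0$ is the number of neutral colors), then show no such partition can exist --- is exactly the paper's approach, and your identification of the combinatorial core in step~(3) is on target.

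The concrete error is your palette size. You set $N\asymp l(1+s^{1/l})$, reasoning that this makes $(1+s^{1/l})^l\asymp (N/l)^l$. But the per-list probabilities $p_1=((1-c)/2)^l$ (monochromatic) and $p_2=((1+c)/2)^l$ (dangerous) depend only on the \emph{fraction} $c=v_0/N$ of neutral colors, not on $N$ itself; the quantity $(1+s^{1/l})^l$ arises from optimizing over $c$ (at $c=(s^{1/l}-1)/(s^{1/l}+1)$), independently of $N$. The only place $N$ enters is the union bound over the $3^N$ palette partitions, and your choice makes this uncontrollable. For instance, take $s=2^{l^2}$: then $s^{1/l}=2^l$, so $N\asymp l\cdot 2^l$ and $3^N$ is doubly exponential in $l$, while the per-partition failure probability you aim for is only $e^{-\Theta(T)}$ with $T=t/(2(1+s^{1/l})^l)\asymp(\log s+\log l)\,l^2=\Theta(l^4)$. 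Step~(4) does not close.

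The paper instead fixes the palette size at $v=l^2$. This is precisely the origin of the $l^2$ factor in the hypothesis: the number of palette colorings is $3^{l^2}$, and the tail estimate on dangerous lists produces a term of order $(sT)^{sl^2}e^{-sT}$, forcing $T\gg l^2\log(sT)$ and hence the $(\log s+\log l)\,l^2$ prefactor. With $N$ replaced by $l^2$, the rest of your outline matches the paper's argument.
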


\begin{proof}
Denote $H = K_{t/2,t/2}^s$.

Put $v = l^2$ and consider a set of random (independent and uniform) lists with size $l$ over the left part of $H$, and its copy over the right part.
Let $F$ be the resulting random hypergraph of colors; we refer to its edges as lists to avoid the confusion with the edges of $H$. 

Suppose the contrary: in particular it means that for every $F$ hypergraph $H = K_{t/2,t/2}^s$ has proper coloring in colors of $F$. Consider an arbitrary such proper coloring $\pi_H$.
We call a color (a vertex of $F$) \textit{poor}, if it appears in both parts of $H$; so a poor color appears at most $s-1$ times.
Define a coloring $\pi_F$ on vertices of $F$ as follows: if a color appears only in the left part of $H$ then the corresponding vertex of $F$ is blue, if a color appears only in the right part of $H$ then the corresponding vertex of $F$ is red, finally if a color is poor then the corresponding vertex has no color.
Note that $\pi_F$ contains no monochromatic lists; indeed if a list is blue then one cannot choose a color from a copy of this list over the vertex in the right part of $H$, which contradicts with the existence of $\pi_H$.
We show that with positive probability such a coloring $\pi_F$ cannot exist.

Consider an arbitrary vertex coloring $\pi$ of $F$ in two colors in which some vertices remain colorless. 
We call a list \textit{dangerous} if it has no blue or no red vertices.
It turns out that with positive probability every $\pi$ has a monochromatic list or at least $v_0(s-1)$ dangerous lists, where $v_0$ is the number of colorless vertices. 
Hence $\pi_F$ (and also $\pi_H$) cannot exist, which gives the desired contradiction. Below we provide the calculations that the probability that !!!!!!!!

Let $c = \frac{v_0}{v}$ be the ratio of colorless vertices, denote by $p_1 = p_1(\pi)$ the probability that a random list is monochromatic and by $p_2 = p_2(\pi)$ the probability that it is dangerous. By definition
\[
p_1 = \left( \frac{1-c}{2} \right )^l, \quad \quad p_2 = \left( \frac{1 + c}{2} \right )^l.
\]

Then the probability that we have at most $(s-1)cv$ dangerous lists is smaller than the probability that there are at most $sl^2$ dangerous lists. The latter is evaluated by
\[
\sum_{i = 1}^{sl^2} \binom{t/2}{i} p_2^i (1-p_2)^{t/2 - i}  \leq \sum_{i=1}^{sl^2} \left( \frac{tp_2}{2} \right)^{i}   e^{-p_2(t/2 - i)} \leq \sum_{i=1}^{sl^2} \left( \frac{tp_2}{2} \right)^{i}   e^{-p_2(t/2 - sl^2)}.
\]
Consider the following substitution $c = \frac{s^{1/l}-1}{s^{1/l}+1}$ and put $T = \frac{t}{2(s^{1/l}+1)^l}$. We got
\[
p_2 = \left(\frac{1+c}{2}\right)^l = \left (\frac{s^{1/l}}{s^{1/l}+1} \right)^l = \frac{s}{(s^{1/l}+1)^l}.
\]
Then $\frac{tp_2}{2} = sT$ and 
\[
\sum_{i=1}^{sl^2} \left( \frac{tp_2}{2} \right)^{i}   e^{-p_2(t/2 - sl^2)} \leq {sl^2} \left( \frac{tp_2}{2} \right)^{sl^2}   e^{-p_2(t/2 - sl^2)}. 
\]
One has
\[
e^{-p_2(t/2 - sl^2)} \leq e^{-(p_2t/2-sl^2)} = e^{-sT + sl^2}.
\]
Also 
\[
sl^2 \left( \frac{tp_2}{2} \right)^{sl^2} = (sT + o(1))^{sl^2} = e^{(1+o(1)) \log(sT) sl^2}.
\]
Summing up, under the conditions of theorem
\[
(1+o(1)) l^2 s \log (sT) - sT + sl^2 < 0
\]
and the event that the number of dangerous lists is smaller than $sl^2$ has probability smaller than $\frac{1}{2}$.

On the other hand
\[
(1-p_1)^{t/2} \leq e^{-p_1t/2} \leq e^{-\frac{t}{2(1+s^{1/l})^l}} = e^{-T} < \frac{1}{2}.
\]
So for $c = \frac{s^{1/l}-1}{s^{1/l}+1}$ a random $l$-graph of lists with positive probability has both a monochromatic list and at least $sl^2$ dangerous lists in every 2-coloring. 
From a combinatorial argument for smaller $c$ a random $l$-graph of lists with positive probability has a monochromatic list in every 2-coloring, and for bigger $c$ 
with positive probability has at least $sl^2$ dangerous lists in every 2-coloring. 

\end{proof}

\section{Applications and discussion}

\begin{enumerate}
    \item A hypergraph is $k$-\textit{regular} if the degree of every vertex is equal to $k$.
    It is known that a $k$-uniform $k$-regular hypergraph is 2-colorable for $k \geq 4$~\cite{thomassen1992even,henning20132}. 
    Thus Theorem~\ref{thm:sparse} gives that such graph has the list chromatic number 2. So $k$-uniform $k$-regular hypergraphs are chromatic-choosable for $k \geq 4$.


    
\item It turns out that there is a large gap between the bounds in dense and sparse cases; the same holds even for 2-graphs.
As far as we are aware, the best known general bounds on the choice number of $d$-regular bipartite graph $G$ are the following (see~\cite{alon2000degrees})
\[
\left(\frac{1}{2} - o(1)\right) \log d\leq \ch (G) \leq c \frac{d}{\log d}.
\]
Note that Erd{\H o}s -- Rubin -- Taylor gives a tight bound for a complete bipartite graph
\[
ch(K^2_{t,t}) = (1+o(1)) \log t. 
\]

\end{enumerate}


\paragraph{Acknowledgments.} The research of Danila Cherkashin is supported by «Native towns», a social investment program of
PJSC «Gazprom Neft». The research of Alexey Gordeev was funded by RFBR, project number 19-31-90081.

\bibliographystyle{plain}
\bibliography{main}

\section*{Appendix. Alternative proof of Theorem~\ref{thm:orientation}}

\begin{proof}
Denote parts of $H$ as $A,B\subset V$; every edge of $H$ intersects both $A$ and $B$. For each edge $e$ of $H$ choose a spanning tree $T_e$ on its set of vertices such that each edge of $T_e$ connects a vertex from part $A$ with a vertex from part $B$. Consider the following polynomial on variables $\{x_a\}_{a\in A}$, $\{y_b\}_{b\in B}$:
\[
F_H(x,y)=\prod_{e\in E}\left(\sum_{(a,b)\in T_e} (x_a-y_b)\right).
\]
Note that if $F_H(x,y)\neq 0$ then the values of $x,y$ form a proper coloring of $H$ (the reverse is not necessarily true). By Combinatorial Nullstellensatz (see \cite{Alon1999comb}), if the coefficient 
\[
\left[\prod_{a\in A}x_a^{d_\phi(a)}\prod_{b\in B}y_b^{d_\phi(b)}\right]F_H\neq 0,
\]
then $H$ is $(d_\phi+1)$-choosable. Consider now
\[
F^*_H(x,y)=F_H(x,-y)=\prod_{e\in E}\left(\sum_{(a,b)\in T_e} (x_a+y_b)\right).
\]
Note that
\[
\left[\prod_{a\in A}x_a^{d_\phi(a)}\prod_{b\in B}y_b^{d_\phi(b)}\right]F_H=\pm\left[\prod_{a\in A}x_a^{d_\phi(a)}\prod_{b\in B}y_b^{d_\phi(b)}\right]F^*_H,
\]
hence one can study coefficients of $F^*_H$ instead of $F_H$. The coefficient of $F^*_H$ in question is nonzero, because no summands will cancel out after opening the brackets, and orientation $\phi$ corresponds to at least one summand with the desired coefficient.
\end{proof}

\end{document}